\DeclareMathSymbol{\twoheadrightarrow}  {\mathrel}{AMSa}{"10}
\def\P{{\mathbb P}}
                     \def\fFF{{\mathfrak f}}
\def\A8{{\mathbf A}_8}
\def\Bir{\mathrm{Bir}}
\def\Aut{\mathrm{Aut}}
\def\A{\mathcal{A}}
\def\T{{\mathcal T}}
                                       \def\Diff{\mathrm{Diff}}
\def\dim{\mathrm{dim}}
       \def\PGL{\mathrm{PGL}}
          \def\l1{{\mathbf 1}}
\newtheorem{thm}{Theorem}[section]
\newtheorem{lem}[thm]{Lemma}
\newtheorem{cor}[thm]{Corollary}
\theoremstyle{definition}
\newtheorem{defn}[thm]{Definition}
\newtheorem{rem}[thm]{Remark}
        \newtheorem{sect}[thm]{}
\title[Jordan groups]{Jordan groups and algebraic surfaces}
\thanks{The first named author is partially supported by the Ministry
of Absorption (Israel), the Israeli Science Foundation (Israeli
Academy of Sciences, Center of Excellence Program), the Minerva
Foundation (Emmy Noether Research Institute of Mathematics). }
\thanks{The second named author is partially supported by a grant from the Simons Foundation (\#246625 to Yuri Zarkhin).}
\author {Tatiana Bandman}
\author[Yuri G.\ Zarhin]{Yuri G.\ Zarhin}
\address{Department of
Mathematics, Bar-Ilan University, 5290002, Ramat Gan, ISRAEL}
\email{bandman@macs.biu.ac.il}
\address{Department of Mathematics, Pennsylvania State University,
University Park, PA 16802, USA}
\address{Department of Mathematics, The Weizmann Institute of Science,
 POB 26,  Rehovot 7610001, Israel}
\email{zarhin\char`\@math.psu.edu}
\begin{document}

\begin{abstract}
We prove that an analogue of Jordan's theorem on finite subgroups of general linear groups  holds
 for the groups of biregular automorphisms of  algebraic surfaces. This gives a positive answer to a question of Vladimir L. Popov.
\end{abstract}

\subjclass[2010]{14E07, 14J26, 14J50, 14L30, 14K05}

\maketitle

\section{Introduction}

Throughout this paper,  $k$ is an algebraically closed field of characteristic zero and  $\P^1$ is the projective line  over $k$.
Let $U$ be an  {\sl algebraic variety} over $k$ \cite[Vol. 2, Ch. VI, Sect. 1]{ShAG}.  Then $U(k)$ and  $\Aut(U)$  stand for its set of $k$-points and  the group of biregular $k$-automorphisms  respectively. Unless otherwise stated, by a point of $U$ we mean a $k$-point. If $U$ is {\sl irreducible} then we write  $k(U)$ and $\Bir(U)$ for its field of rational functions and the  group of  birational
$k$-automorphisms respectively; $\Aut(U)$ is a subgroup of $\Bir(U)$.
By an elliptic curve we mean an irreducible smooth projective curve of genus 1 over $k$.
  If $X$ is an elliptic curve and $\T \subset X(k)$ is a {\sl nonempty} finite set of points on $X$ then the (sub)group
 $$\Aut(X,\T) =\{u \in \Aut(X)\mid u(\T)=\T\} \subset \Aut(X)$$
  is {\sl finite}, since $X \setminus \T$ is a {\sl hyperbolic} curve.
If $\mathcal{S}$ is a smooth irreducible projective surface over $k$ then an irreducible closed curve $C$ in $\mathcal{S}$ is called a $(-1)$-curve if it is smooth rational and its self-intersection index is $-1$.

The following definition was inspired by the classical theorem of Jordan \cite[Sect. 36]{CR} about finite subgroups of general linear groups over fields of characteristic zero.

\begin{defn}[Definition 2.1 of \cite{Popov}]
A group $B$ is called a {\sl Jordan group} if there exists a positive integer $J_B$ such that every finite subgroup $B_1$ of $B$ contains a
normal commutative subgroup, whose index in $B_1$ is at most $J_B$.
\end{defn}

\begin{rem}
\label{Jsub}
Clearly,  a subgroup of a Jordan group is also Jordan. If a Jordan group  $G_1$ is a subgroup of {\sl finite} index in a group $G$ then $G$ is also Jordan.
\end{rem}

 V. L. Popov (\cite[Sect. 2]{Popov}, see also \cite{Popov2}) posed a question whether $\Aut(S)$ is a Jordan group when $S$ is an  algebraic surface over $k$.
 He obtained a positive answer to his question for almost all
 surfaces. (The case of rational surfaces was treated earlier by J.-P. Serre \cite[Sect. 5.4]{Serre}). The only remaining case is  when $S$ is birationally (but not biregularly) isomorphic to a product $X\times\P^1$ of an elliptic curve $X$ and the projective line.
In \cite{ZarhinSurface} the second named author proved that $\Aut(S)$ is a Jordan group if $S$ is a {\sl projective} surface. The aim of this paper is to extend this result to the case of arbitrary algebraic surfaces.
 Our main result is the following statement, which gives a positive  answer to Popov's question.

 \begin{thm}
 \label{elliptic}
 If $X$ is an elliptic curve over $k$  and $S$ is an irreducible normal  algebraic surface that is birationally isomorphic to $X\times\P^1$ then
 $\Aut(S)$ is a Jordan group.
 \end{thm}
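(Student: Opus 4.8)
The plan is to control $\Aut(S)$ through the elliptic fibration that $S$ inherits from the projection $X\times\P^1\to X$, and then to split into a ``finite'' and an ``infinite'' case according to how $\Aut(S)$ moves the base. First I would reduce to $S$ smooth: the minimal resolution $\widetilde S\to S$ is canonical, so $\Aut(S)$ acts on $\widetilde S$ and embeds into $\Aut(\widetilde S)$, and by Remark~\ref{Jsub} it is enough to treat $\widetilde S$; so assume $S$ smooth. Pulling back the first projection along a birational isomorphism gives a dominant rational map from $S$ to $X$; since $X$ is an abelian variety (in particular it contains no rational curves), this rational map is a morphism $\pi\colon S\to X$ by Weil's extension theorem. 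Every dominant rational map from $X\times\P^1$ to $X$ factors, as a rational map, through the first projection, so for each $u\in\Aut(S)$ there is a dominant self-morphism $\psi_u$ of $X$ with $\pi\circ u=\psi_u\circ\pi$; applying this to $u$ and to $u^{-1}$ shows $\psi_u\in\Aut(X)$, and $u\mapsto\psi_u$ defines a homomorphism $\rho\colon\Aut(S)\to\Aut(X)$. Its kernel $N$ consists of the automorphisms of $S$ over $X$, and restriction to the generic fibre $S_\eta$ of $\pi$ is an injective homomorphism $N\to\Aut_{k(X)}(S_\eta)$ (an automorphism of $S$ over $X$ trivial on $S_\eta$ is trivial on a dense open subset, hence trivial). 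As $S_\eta$ is a smooth curve over $k(X)$ birationally isomorphic to $\P^1_{k(X)}$, it is an open subscheme of $\P^1_{k(X)}$, and an automorphism of it extends to the smooth completion $\P^1_{k(X)}$; thus $N$ embeds into $\PGL_2(k(X))$. Since every finite subgroup of $\PGL_2$ over a field of characteristic zero is cyclic, dihedral, $\mathbf A_4$, $\mathbf S_4$ or $\mathbf A_5$, and hence has a normal abelian subgroup of index at most $60$, the group $N$ is Jordan.

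If $\rho(\Aut(S))$ is finite, then the Jordan group $N$ has finite index in $\Aut(S)$, so $\Aut(S)$ is Jordan by Remark~\ref{Jsub}. This covers in particular the case when $\pi$ is not surjective: then its image is $X\setminus\T$ for a nonempty finite set $\T$, and $\pi\circ u=\rho(u)\circ\pi$ forces $\rho(u)(\T)=\T$, so $\rho(\Aut(S))\subseteq\Aut(X,\T)$, which is finite.

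The substantial case is $\rho(\Aut(S))$ infinite. Since the translations form a subgroup of index at most $6$ in $\Aut(X)$, the subgroup $\Lambda=\rho(\Aut(S))\cap X(k)$ of $X(k)$ is then infinite; for each $P\in\Lambda$ choose $u_P\in\Aut(S)$ with $\pi\circ u_P=t_P\circ\pi$, so that $u_P$ maps $\pi^{-1}(x)$ isomorphically onto $\pi^{-1}(x+P)$ for every $x$. On the other hand, the fibres of $\pi$ over all but finitely many points of $X$ are isomorphic to the generic fibre, and this finite exceptional set is $\Lambda$-stable, hence empty. Therefore $\pi\colon S\to X$ is smooth with all fibres isomorphic to a fixed curve $\P^1\setminus D_0$, so $S$ is an \'etale-locally trivial fibre bundle over $X$ with this fibre; filling in $D_0$ fibrewise produces a $\P^1$-bundle $\bar S\to X$ (the Brauer group of the curve $X$ vanishes) containing $S$ as the complement of a smooth horizontal divisor $\bar S\setminus S$ which is \'etale over $X$, and hence of genus $\ge 1$. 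In particular $\bar S$ is a smooth projective surface birationally isomorphic to $X\times\P^1$. Each $u\in\Aut(S)$ extends to a birational self-map of $\bar S$ that is an isomorphism on $S$; since no component of $\bar S\setminus S$ is a $(-1)$-curve, this self-map contracts no curve in either direction and is therefore an automorphism of $\bar S$, so $\Aut(S)$ embeds into $\Aut(\bar S)$. As $\bar S$ is projective, $\Aut(\bar S)$ is a Jordan group by \cite{ZarhinSurface}, and hence so is $\Aut(S)$, once more by Remark~\ref{Jsub}. (When $D_0=\emptyset$ one simply has $\bar S=S$, which is then already projective.)

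The main obstacle is this last case, and more precisely the rigidity step within it. One cannot argue purely birationally, because $\Bir(X\times\P^1)$ is \emph{not} a Jordan group; it is exactly the everywhere-definedness of the elements of $\Aut(S)$, together with the finiteness of the set of degenerate fibres of $\pi$, that forces $S$ to be a fibre bundle over $X$ and hence to admit a projective completion on which the whole of $\Aut(S)$ acts. Making this precise — the isotriviality of $\pi$, the construction of the fibrewise completion, and the verification that automorphisms of $S$ extend to it — is the technical heart of the argument; the rest is bookkeeping with finite-index subgroups and the input of \cite{ZarhinSurface} for the projective case.
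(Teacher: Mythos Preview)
Your proposal is correct and follows the same architecture as the paper: reduce to smooth $S$, use the fibration $\pi:S\to X$ and the homomorphism to $\Aut(X)$ with kernel inside $\PGL_2(k(X))$, and in the remaining case embed $\Aut(S)$ into the automorphism group of a smooth projective model so that \cite{ZarhinSurface} applies. The packaging differs in two places. First, your case split is on whether $\rho(\Aut(S))$ is finite, whereas the paper splits on whether the finite $\Aut(S)$-stable sets $T_0=X\setminus\pi(S)$ and $T_1=\{\text{bad fibres}\}$ are empty; your ``hard case'' is therefore a subcase of the paper's, and you exploit the extra information (an infinite group of translations acting) to force all fibres of $\pi$ to be isomorphic. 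Second, where you build the projective model by an isotriviality argument (``filling in $D_0$ fibrewise''), the paper uses Fujita's \emph{minimal closure} --- a smooth projective $\bar S\supset S$ in which every $(-1)$-curve meets $S$ --- and proves directly (Lemma~\ref{irred}) that its fibres over $X$ are irreducible; making your construction precise would in effect reproduce this. One small overreach: your assertion that $\bar S\setminus S$ is a smooth divisor \'etale over $X$ is stronger than you justify and stronger than you need. What your extension argument actually uses is that no curve in $\bar S\setminus S$ is rational, and this is immediate once $\bar S$ is a $\P^1$-bundle over $X$: any such curve is horizontal, hence dominates the elliptic curve $X$, hence cannot be $\P^1$.
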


\begin{rem}
 The group $\Bir(X\times\P^1)$  is {\sl not}  Jordan  \cite{ZarhinEdinburgh}.
 \end{rem}

 \begin{rem}
\label{quasiP}
 Suppose that $S$ is a {\sl non}-smooth irreducible normal surface. Since it is normal, there are only finitely many  singular points on $S$. Then, by \cite[Sect. 2, Cor. 8]{Popov2}, $\Aut(S)$ is Jordan.
 This implies that in the course of the proof of Theorem \ref{elliptic} we may assume that $S$ is smooth. On the other hand,
by a theorem of Zariski \cite[Cor. II.2.6 on p. 53]{Zar}, every irreducible smooth surface is quasi-projective. This implies that in the course of the proof of Theorem \ref{elliptic} we may assume that $S$ is {\sl smooth quasi-projective}.
 \end{rem}

 \begin{cor}
 \label{surfaceJ}
 Suppose that $V$ is an irreducible normal  algebraic variety over $k$. If $\dim(V) \le 2$ then $\Aut(V)$ is Jordan.
 \end{cor}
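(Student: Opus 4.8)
The plan is to split on $\dim(V)$ and to reduce the essential case, that of a normal surface, to Theorem \ref{elliptic} together with the previously known results quoted in the introduction. If $\dim(V)=0$ then $V$ is a point and $\Aut(V)$ is trivial. If $\dim(V)=1$ then $V$, being irreducible and normal, is a smooth curve, hence an open dense subset of its (unique) smooth projective model $C$; since a birational self-map of a smooth projective curve is an automorphism and carries $V$ onto $V$, every element of $\Aut(V)$ extends to an element of $\Aut(C)$, and $\Aut(V)$ is thereby identified with a subgroup of $\Aut(C)$. By Remark \ref{Jsub} it thus suffices to check that $\Aut(C)$ is Jordan. If the genus of $C$ is at least $2$, then $\Aut(C)$ is finite and hence Jordan. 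If $C$ is an elliptic curve $X$, then the translations form a commutative normal subgroup $X(k)$ of $\Aut(X)$ whose quotient is the finite group $\Aut(X,\{O\})$, so for any finite subgroup $B_1\subseteq\Aut(X)$ the intersection $B_1\cap X(k)$ is a commutative normal subgroup of index dividing $|\Aut(X,\{O\})|$; hence $\Aut(X)$ is Jordan. If $C=\P^1$, then $\Aut(C)=\PGL_2(k)$, which, being a linear algebraic group over $k$ (indeed $\PGL_2\cong\SO_3$), embeds into some $\GL_N(k)$ and is therefore Jordan by the classical theorem of Jordan \cite[Sect. 36]{CR}.

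It remains to treat $\dim(V)=2$, i.e.\ $V$ an irreducible normal surface. If $V$ is singular, then $\Aut(V)$ is Jordan by Remark \ref{quasiP} (that is, by \cite[Sect. 2, Cor. 8]{Popov2}). If $V$ is smooth, then by Remark \ref{quasiP} it is quasi-projective, and we look at its birational type. If $V$ is not birationally isomorphic to $X\times\P^1$ for any elliptic curve $X$, then $\Aut(V)$ is Jordan by the results of Popov \cite{Popov} (supplemented, in the case of rational surfaces, by Serre \cite[Sect. 5.4]{Serre}). If $V$ is birationally isomorphic to such a product, then $\Aut(V)$ is Jordan by Theorem \ref{elliptic}. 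In all cases $\Aut(V)$ is Jordan, which proves the corollary.

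I do not expect a genuine obstacle here: all the real content sits in Theorem \ref{elliptic}, and the corollary is essentially bookkeeping. The only points that need a little care are checking that passage to the smooth projective model carries $\Aut(V)$ injectively into $\Aut(C)$ in the curve case, and making sure that the singular and low-dimensional cases are genuinely accounted for --- which is precisely what the appeal to Remark \ref{quasiP} (for the singular surface case and for the reduction to the quasi-projective setting) and the elementary analysis of curves above supply.
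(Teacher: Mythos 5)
Your argument is correct, but it dispatches the non-exceptional cases differently from the paper. The paper's proof is essentially two lines: since $\Aut(V)\subset\Bir(V)$ and Jordaness is inherited by subgroups, it invokes Popov's theorem (\cite[Th.\ 2.32]{Popov}) that $\Bir(V)$ is Jordan for every $V$ of dimension $\le 2$ that is \emph{not} birationally isomorphic to a product of an elliptic curve and $\P^1$, and reserves Theorem \ref{elliptic} for the one remaining birational class. Because that statement is purely birational, no case split by dimension, smoothness, or singularity is needed at all. You instead re-derive the easy cases by hand: points, curves (via the smooth projective model and an explicit check that $\Aut(C)$ is Jordan in each genus), singular normal surfaces via \cite[Sect.\ 2, Cor.\ 8]{Popov2} as in Remark \ref{quasiP}, and smooth surfaces of non-exceptional birational type via Popov and Serre \cite{Popov,Serre}; only then do you apply Theorem \ref{elliptic}. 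All of these steps are sound (and the curve analysis is a nice elementary substitute for quoting Popov in dimension $\le 1$), but the appeals to Remark \ref{quasiP} — both for singular surfaces and for the reduction to the quasi-projective setting — are superfluous here: once you use $\Aut(V)\subset\Bir(V)$ and the birational dichotomy, normal-but-singular $V$ and smooth $V$ are treated uniformly, which is exactly the economy the paper's proof achieves. The genuinely new input, Theorem \ref{elliptic} for normal surfaces birational to $X\times\P^1$, is used identically in both arguments.
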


 \begin{proof}[Proof of Corollary \ref{surfaceJ}]
 We have $\Aut(V) \subset \Bir(V)$. If $V$ is {\sl not} birationally isomorphic to a product of the projective line and an elliptic curve then
 $\Bir(V)$ is Jordan (\cite[Th. 2.32]{Popov}) and therefore its subgroup $\Aut(V)$ is also Jordan. If $V$ is birationally isomorphic to a product of the projective line and an elliptic curve then $\dim(V)=2$ and Theorem \ref{elliptic}  implies that $\Aut(V)$ is Jordan.
 \end{proof}

\begin{thm}
\label{NonNormal}
Let $V$ be an irreducible  algebraic variety over $k$. If $\dim(V) \le 2$ then $\Aut(V)$ is Jordan.
\end{thm}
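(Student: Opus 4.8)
The plan is to reduce Theorem \ref{NonNormal} to the normal case already established in Corollary \ref{surfaceJ}, by passing to the normalization of $V$. Let $\pi\colon \tilde V \to V$ be the normalization morphism. Since $V$ is irreducible, so is $\tilde V$; since $\pi$ is finite and surjective, $\dim \tilde V = \dim V \le 2$; and $\tilde V$ is normal by construction. Thus $\tilde V$ is an irreducible normal algebraic variety of dimension $\le 2$, and Corollary \ref{surfaceJ} tells us that $\Aut(\tilde V)$ is a Jordan group.

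Next I would show that $\Aut(V)$ is isomorphic to a subgroup of $\Aut(\tilde V)$. By the universal property of normalization, each $\phi \in \Aut(V)$ lifts uniquely to a morphism $\tilde\phi\colon \tilde V \to \tilde V$ satisfying $\pi \circ \tilde\phi = \phi \circ \pi$; applying this to $\phi^{-1}$ and using the uniqueness of lifts shows that $\tilde\phi$ is an automorphism and that $\phi \mapsto \tilde\phi$ is a group homomorphism $\Aut(V) \to \Aut(\tilde V)$. It is injective: if $\tilde\phi = \IT_{\tilde V}$, then $\phi\circ\pi = \pi$, and since $\pi$ is dominant and $V$ is a variety, this forces $\phi = \IT_V$. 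Hence $\Aut(V)$ embeds into the Jordan group $\Aut(\tilde V)$, and by Remark \ref{Jsub} it is itself Jordan.

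I do not expect a genuine obstacle here. The whole point of the theorem, relative to Corollary \ref{surfaceJ}, is to drop the normality hypothesis on $V$, and this is absorbed by the single normalization step above; the only things requiring (entirely routine) care are the functoriality of normalization under automorphisms and the injectivity of the resulting homomorphism $\Aut(V) \to \Aut(\tilde V)$. Everything else is a direct appeal to Corollary \ref{surfaceJ} and Remark \ref{Jsub}.
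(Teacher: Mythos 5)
Your proposal is correct and follows essentially the same route as the paper: pass to the normalization, lift each automorphism via the universal property to obtain an embedding $\Aut(V)\hookrightarrow\Aut(V^{\nu})$, and conclude by Corollary \ref{surfaceJ} together with the fact that a subgroup of a Jordan group is Jordan. The extra details you supply (that the lift is an automorphism and that the homomorphism is injective) are exactly the routine points the paper delegates to the cited reference on normalization.
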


\begin{proof}[Proof of Theorem \ref{NonNormal}]
Let $\nu:V^{\nu} \to V$ be the {\sl normalization} of $V$
(\cite[Ch. III, Sect. 8]{MumfordRed}, \cite[Ch. 2,  Sect. 2.14]{Itaka}). Here $\nu$ is a birational (surjective) regular map that is called the normalization map for $V$ and
 $V^{\nu}$ is an irreducible  {\sl normal} variety (of the same dimension as $V$) over $k$ \cite[Th. 4 on p. 203]{MumfordRed}.
 The {\sl universality property} of the normalization map
  implies that every biregular automorphism of $V$ lifts uniquely to a biregular automorphism of $V^{\nu}$ \cite[Ch. 2,  Sect. 2.14, Th. 2.25 on p, 141]{Itaka}. This
give rise to the {\sl embedding} of groups
$$\Aut(V)\hookrightarrow \Aut(V^{\nu}).$$
By  Corollary \ref{surfaceJ}, the group $\Aut(V^{\nu})$ is Jordan. Since $\Aut(V)$ is isomorphic to a subgroup of  Jordan   $\Aut(V^{\nu})$, it is also Jordan.

\end{proof}

\begin{cor}
Let $V$ be an algebraic variety over $k$.
 If $\dim(V)\le 2$ then $\Aut(V)$ is Jordan.
\end{cor}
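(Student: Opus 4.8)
The plan is to deduce this from the irreducible case, Theorem \ref{NonNormal}, via the decomposition of $V$ into irreducible components. Let $V_1,\dots,V_m$ be the finitely many irreducible components of $V$, each equipped with its reduced structure, so that every $V_i$ is an irreducible algebraic variety over $k$ with $\dim(V_i)\le\dim(V)\le 2$. First I would observe that each $u\in\Aut(V)$ permutes the finite set $\{V_1,\dots,V_m\}$; this gives a homomorphism from $\Aut(V)$ to the symmetric group $\mathbf{S}_m$, whose kernel $G_0$ has index at most $m!$ in $\Aut(V)$. By Remark \ref{Jsub} it therefore suffices to prove that $G_0$ is Jordan.

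Next I would restrict the elements of $G_0$ to the components. For $u\in G_0$ one has $u(V_i)=V_i$, so (using the reduced structure on $V_i$) $u$ and $u^{-1}$ induce mutually inverse morphisms $V_i\to V_i$, and hence $u$ restricts to an automorphism $u_i\in\Aut(V_i)$. Since $V=\bigcup_{i=1}^m V_i$ and $V$ is reduced and separated, an automorphism of $V$ is determined by the tuple of its restrictions to the $V_i$; thus $u\mapsto(u_1,\dots,u_m)$ is an injective homomorphism $G_0\hookrightarrow\prod_{i=1}^m\Aut(V_i)$. By Theorem \ref{NonNormal}, each factor $\Aut(V_i)$ is Jordan.

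The only remaining point is the elementary fact that a finite direct product of Jordan groups is Jordan. For this, given a finite subgroup $B_1$ of $\prod_{i=1}^m B_i$ with each $B_i$ Jordan, one takes the projections $p_i\colon B_1\to B_i$, chooses in each $p_i(B_1)$ a normal commutative subgroup $A_i$ of index at most $J_{B_i}$, and forms $A=\bigcap_{i=1}^m p_i^{-1}(A_i)$; then $A$ is normal in $B_1$ of index at most $\prod_{i=1}^m J_{B_i}$, and $A$ is commutative because it is contained in $\prod_{i=1}^m A_i$. Applying this to $\prod_{i=1}^m\Aut(V_i)$ shows that this product is Jordan; hence its subgroup $G_0$ is Jordan by Remark \ref{Jsub}, and finally $\Aut(V)$ is Jordan by Remark \ref{Jsub}, since it contains $G_0$ as a subgroup of finite index.

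I do not expect a genuine obstacle here: the corollary is a formal consequence of Theorem \ref{NonNormal}. The two places that call for a little care are the verification that restriction to a reduced irreducible component yields a well-defined automorphism and that these restrictions jointly recover the original automorphism (this is exactly where reducedness and separatedness of $V$ enter), and the bookkeeping lemma that finite products of Jordan groups are Jordan — the one step above that needs an argument rather than a citation, though an entirely routine one.
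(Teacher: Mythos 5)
Your proof is correct and follows the same strategy as the paper: decompose $V$ into its finitely many irreducible components $V_i$, note $\dim(V_i)\le 2$, and apply Theorem \ref{NonNormal} to each $\Aut(V_i)$. The only difference is that where the paper simply cites Lemma 1 in Section 2.2 of \cite{Popov2} for the passage from the components to $V$, you prove that reduction yourself --- the permutation action on components, the embedding of its kernel into $\prod_i\Aut(V_i)$ via restriction (which is where reducedness and separatedness are used, exactly as you note), and the elementary fact that a finite product of Jordan groups is Jordan --- and each of these steps, including the index bound $\prod_i J_{B_i}$ obtained by intersecting the pullbacks $p_i^{-1}(A_i)$, is correct. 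So your argument is a self-contained version of the paper's one-line proof, at the cost of a bit of routine group-theoretic bookkeeping.
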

\begin{proof}
Let $V_1, \dots , V_r$ be all the {\sl irreducible} components of $V$. Clearly, all $V_i$ are irreducible algebraic varieties with
$\dim(V_i) \le \dim(V) \le 2$. By Theorem  \ref{NonNormal}, all $\Aut(V_i)$ are Jordan. Now Lemma 1 in Section 2.2 of \cite{Popov2} implies that $\Aut(V)$ is also Jordan.
\end{proof}

\begin{rem}
Suppose that $k$ is the field $\mathbb{C}$ of complex numbers and $X$ is a smooth irreducible quasi-projective non-projective surface. Then $M=X(\mathbb{C})$ carries the natural structure of a connected oriented smooth {\sl real
noncompact} fourfold and the group $\Aut(X)$ embeds naturally in the group $\Diff(M)$ of the (real)  diffeomorphisms of the fourfold  $M$. While $\Aut(X)$ is always Jordan, there are examples of connected oriented smooth {\sl noncompact} real fourfolds, whose group of diffeomorphisms is {\sl not} Jordan \cite{Popov3}.
\end{rem}

The paper is organized as follows. In Section \ref{Bir1} we discuss {\sl minimal closures} of surfaces.  In Section \ref{mainP} we prove Theorem \ref{elliptic}.

{\bf Acknowledgements}. We are deeply grateful to Vladimir  Popov for a stimulating question and useful discussions.
This work was started in September 2013 when both authors were visitors at the  Max-Planck-Institut f\"ur Mathematik (Bonn), whose hospitality and support are gratefully acknowledged. Most of this work was done during the academic year 2013/2014 when the second named author (Y.Z.) was Erna and Jakob Michael Visiting Professor in the Department of Mathematics  at the Weizmann Institute of Science, whose hospitality and support are gratefully acknowledged.

\section{Minimal closures}
\label{Bir1}

\begin{sect}
 \label{prelim}
 Let $X$ be an elliptic curve over $k$ and $S$ be a {\sl smooth} irreducible quasi-projective surface over $k$ that
 is birationally isomorphic to $X \times \P^1$.
There exists an irreducible smooth projective surface $\bar{S}$ such that its certain  Zariski-open subset is biregularly isomorphic to $S$ (further we identify $S$ with this open subset).
Clearly,  the inclusion map $S \subset \bar{S}$ is a birational morphism.
This implies that
$$\Aut(S) \subset \Bir(S) =\Bir(\bar{S})$$
and therefore one may view $\Aut(S)$ as a subgroup of $\Bir(\bar{S})$.
Since $\bar{S}$ is birationally  isomorphic to $S$, it also   birationally  isomorphic to $X\times\P^1$.

 Let us fix a birational isomorphism between $\bar{S}$ and $X\times\P^1$. The projection map $X\times\P^1\to X$ gives rise to a rational map $\bar{\pi}:\bar{S} \to X$ with dense image. Since $\bar{S}$ is smooth and $X$ becomes abelian variety (after a choice of a base point), it follows from a theorem of Weil \cite[Sect. 4.4]{Neron} that $\bar{\pi}$ is regular.
 Since $\bar{S}$ is projective,  $\bar{\pi}: \bar{S} \to X$ is surjective, because its image is closed.

 For each $x \in X(k)$ we write
$\bar{F}_x$ for the effective divisor $\bar{\pi}^{*}(x)$ on $\bar{S}$ that is the pullback (under $\bar{\pi}$) of the divisor $(x)$ on $\bar{S}$.  Clearly, the support of $\bar{F}_x$ coincides with the  curve $\bar{\pi}^{-1}(x)$ on $\bar{S}$.
One say that the fiber of $\bar{\pi}$ over $x$ is {\sl reduced} if all irreducible components of the divisor $\bar{F}_x$ have multiplicity $1$. We say that  the fiber of $\bar{\pi}$ over $x$ is {\sl irreducible} if the curve  $\bar{\pi}^{-1}(x)$ is irreducible;
if this is the case then its multiplicity in  $\bar{F}_x$ is $1$ \cite[Ch. 3,  Sect. 1.4, Lemma 1.4.1(1) on p. 195]{Mi}.

 It is known
\cite[Ch. IV]{ShSurface} that for all but finitely many  $x \in X(k)$ the fiber of $\bar{\pi}$ over $x$ is  irreducible and reduced, and the curve $\bar{\pi}^{-1}(x)$ is smooth (and irreducible). We call such fibers nonsingular and other fibers {\sl singular}.

 If $C$ is a rational curve on $\bar{S}$ then the restriction of $\bar{\pi}$ to $C$ must be a constant map, because every map from a rational curve to an elliptic curve is constant. This implies that $C$ lies in a fiber of $\bar{\pi}$.
(In particular, every $(-1)$-curve on $\bar{S}$ lies in a fiber of $\bar{\pi}$.) This implies that every birational automorphism of $\bar{S}$ is fiberwise \cite[Sect. 13, Th. 2]{ISH}; see Sect. 2.2 below.

However,  if $x \in X(k)$  and the fiber $\bar{\pi}^{-1}(x)$ is singular
then
the corresponding divisor $\bar{F}_x$  enjoys the following properties \cite[Ch. I, Sect. 2.12;  Ch. 3,  Sect. 1.4, Lemma 1.4.1 on p. 195]{Mi}] (see also \cite{Fu}).

\begin{itemize}
\item[(i)]
Each irreducible component of $\bar{F}_x$ is a smooth rational curve (and the corresponding graph is a tree) \cite[Sect. 3]{Fu}.
\item[(ii)]
At least, one of the irreducible components of $\bar{F}_x$ is a $(-1)$-curve \cite[Sect. 4.2]{Fu}.
\item[(iii)]
If one of the irreducible components of $\bar{F}_x$ is a $(-1)$-curve of multiplicity $1$ then there is another irreducible
 $(-1)$-component of  $\bar{F}_x$ (\cite[Sect. 4.2]{Fu}.
\end{itemize}
\end{sect}

\begin{sect}
\label{barS}

If $\sigma \in \Bir(\bar{S})$ then
there is a unique {\sl biregular} automorphism $\fFF(\sigma): X \to X$ such that the composition $\bar{\pi} \sigma$  is a {\sl regular} map that coincides with the composition
$$\fFF(\sigma)\bar{ \pi}: \bar{S} \stackrel{\bar{\pi}}{\to} X \stackrel{\fFF(\sigma)}{\to} X$$
(see, e.g., \cite[Lecture V, Sect. 1.4, p. 99]{PM}).
 Clearly, $\sigma$ sends the fiber $\bar{\pi}^{-1}(x)$ to the fiber $\bar{\pi}^{-1}(\fFF(\sigma)(x))$ for all $x \in X(k)$.
We get a surjective group homomorphism
$$\fFF: \Bir(\bar{S}) \to \Aut(X), \ \sigma \mapsto \fFF(\sigma)$$
 that fits into a short exact sequence
 $$\{1\} \to \Bir_X(\bar{S}) \subset \Bir(\bar{S}) \stackrel{\fFF}{\to} \Aut(X)\to \{1\}$$
 where the subgroup
 $\Bir_X(\bar{S})$ consists of all birational automorphisms $\sigma \in \Bir(\bar{S})$ such that $\bar{\pi}\sigma=\bar{\pi}$
 (i.e. $\sigma$ leaves invariant every fiber of $\bar{\pi}$). In addition, $\Bir_X(\bar{S})$ is isomorphic to the projective linear group $\PGL(2, k(X))$ over the field $k(X)$ of rational functions on $X$ \cite[Lecture V, Sect. 1.4, p. 99]{PM}.
\end{sect}

 \begin{sect}
\label{open}
We write $\pi$ for the composition
$$S \subset \bar{S} \stackrel{\bar{\pi}}{\to} X,$$
i.e., for the restriction of $\pi$ to $S$.
Recall that $\Aut(S) \subset \Bir(\bar{S})$.
Since $S$ is a surface, it  is not contained in a union of finitely many fibers of $\pi$ in $\bar{S}$. This implies that $\pi(S)$ is infinite and therefore is everywhere dense in $X$. It follows from \cite[vol. 1, Ch. 1, Sect. 5, Th. 6]{ShAG} that either $\pi(S)=X$ or the complement
$T_0:=X(k)\setminus \pi(S(k))$ is a finite set and
$$S \subset \pi^{-1}(X\setminus T_0) \subset \bar{S}.$$
If we write $\Aut_X(S)$ for the intersection (in $\Bir(\bar{S})$) of $\Aut(S)$ and  $\Bir_X(\bar{S})$ then we get a short exact sequence
$$\{1\} \to \Aut_X(S) \subset \Aut(S) \stackrel{\fFF}{\to} f( \Aut(S))\to \{1\}$$
where
$$\Aut_X(S)  \subset \Bir_X(\bar{S}), \  \fFF(\Aut(S)) \subset \Aut(X).$$

Similarly to the case of projective surfaces, if $x \in X(k)$ then we write
$F_x$ for the effective divisor $\pi^{*}(x)$ on $S$ that is the pullback (under $\pi$) of the divisor $(x)$ on $S$.  Clearly, the support of $F_x$ coincides with the curve $\pi^{-1}(x)$ on $S$.  It is also clear that the divisor $F_x$ on $S$ is the pullback of the divisor $\bar{F}_x$ on $\bar{S}$ under the (open) inclusion map $S \subset \bar{S}$.
One says that the fiber of $\pi$ over $x$ is {\sl reduced} if all irreducible components of the divisor $F_x$ have multiplicity $1$. We say that  the fiber of $\pi$ over $x$ is irreducible if it is a multiple of a {\sl simple} divisor, i.e., the curve $\bar{\pi}^{-1}(x)$ is irreducible.  Clearly, if  the fiber of $\bar{\pi}$ over $x$ is irreducible (resp. reduced, resp. smooth) then the fiber of $\pi$ over $x$ is irreducible  (resp. reduced, resp. smooth).
On the other hand,  if $\bar{F}_x$ has an irreducible component, say, $\bar{C}$ that appears in $\bar{F}_x$ with multiplicity  $m>1$  and, in addition,  $\bar{C}$ meets $S$ then $C:=\bar{C}\bigcap S$ is an irreducible curve in $S$ that is a component of $F_x$ and  that appears in $F_x$ with the same multiplicity $m$; in particular, the fiber of $\pi$ over $x$ is {\sl not} reduced.  Notice also that if $\bar{C}_1$ and $\bar{C}_2$ are distinct irreducible components of $\bar{F}_x$
that meet $F_x$
 then
$C_1:=\bar{C}_1\bigcap S$ and $C_2:=\bar{C}_2\bigcap S$ are {\sl distinct} irreducible components of $F_x$; in particular, the fiber of $\pi$ over $x$ is {\sl not} irreducible.

It follows from the results about the fibers of $\bar{\pi}$ mentioned in Sect. \ref{prelim}
(see also theorems of Bertini \cite[vol. 1, Ch. 2, Sect.  6.1 and  6.2]{ShAG}
 that either all the fibers of $\pi$ are smooth irreducible reduced or the set $T_1$ of points $x \in \pi(S(k))\subset X(k)$ such that, at least, one of these properties does not hold, is finite.  Clearly,
$$\fFF(\Aut(S))\subset \Aut(X,T_0), \ \fFF(\Aut(S))\subset \Aut(X,T_1).$$
This implies that if either $T_0$ or $T_1$ is {\sl non}-empty then
$\fFF(\Aut(S))$ is a {\sl finite} group and $\Aut_X(\bar{S})$ is a subgroup of {\sl finite index} in $\Aut(S)$.
\end{sect}

 \begin{sect}
\label{degenerate}
It follows from the theorem of Jordan that the projective linear group
 $\PGL(2, k(X))$ is   Jordan \cite{Popov,ZarhinSurface}. Since  $\Bir_X(\bar{S})$ is isomorphic to $\PGL(2,k(X))$ (see Sect. \ref{barS}), it is also a Jordan group. This implies in turn that its subgroup $\Aut_X(S)$ is also Jordan. It follows that
if either $T_0$ or $T_1$ is {\sl non}-empty then  $\Aut(S)$ contains the Jordan subgroup $\Aut_X(S)$ of finite index and therefore is Jordan itself, thanks to Remark \ref{Jsub}.
\end{sect}
In order to handle the case of empty $T_0$ and $T_1$, we need additional ideas.
\begin{defn}
\label{minimal}
 The projective surface $\bar{S}$ is called a (relative) {\sl minimal closure} of $S$ if every $(-1)$-curve on $\bar{S}$ meets $S$. See \cite[Sect. 4.9]{Fu}.
 A minimal closure of $S$  always exists \cite[Prop. 4.10]{Fu}. (Warning: if $\bar{S}$ is a minimal closure then the complement of $S$ in $\bar{S}$ does {\sl not} have to be a divisor!)
\end{defn}

\begin{lem}[Lemma 4.12 of \cite{Fu}]
\label{irred}  Assume that $\pi(S)=X$ and all the fibers of $\pi$ are smooth irreducible and reduced.

If $\bar{S}$ is a minimal closure of $S$ then all the fibers of $\bar{\pi}: \bar{S} \to X$ are irreducible.
\end{lem}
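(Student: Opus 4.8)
The plan is to argue by contradiction, playing the structural properties (i)--(iii) of a singular fibre of $\bar{\pi}$ recalled in Sect.~\ref{prelim} against the defining property of a minimal closure (Definition~\ref{minimal}), while keeping track of how the components of a divisor $\bar{F}_x$ on $\bar{S}$ restrict to components of $F_x$ on $S$, as set up in Sect.~\ref{open}. So suppose, for contradiction, that the curve $\bar{\pi}^{-1}(x_0)$ is reducible for some $x_0\in X(k)$.

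Then the fibre of $\bar{\pi}$ over $x_0$ is \emph{singular} in the sense of Sect.~\ref{prelim}, so properties (i)--(iii) apply to the effective divisor $\bar{F}_{x_0}$. By (ii), some irreducible component $\bar{C}$ of $\bar{F}_{x_0}$ is a $(-1)$-curve; since $\bar{S}$ is a minimal closure of $S$, the curve $\bar{C}$ meets $S$, so $C:=\bar{C}\cap S$ is an irreducible curve in $S$, it is a component of $F_{x_0}$, and it occurs in $F_{x_0}$ with the same multiplicity as $\bar{C}$ does in $\bar{F}_{x_0}$ (Sect.~\ref{open}). Since the fibre of $\pi$ over $x_0$ is \emph{reduced}, this multiplicity equals $1$; thus $\bar{C}$ is a $(-1)$-component of $\bar{F}_{x_0}$ of multiplicity $1$, and property (iii) now yields a \emph{second} irreducible $(-1)$-component $\bar{D}\neq\bar{C}$ of $\bar{F}_{x_0}$. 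Again by minimality, $\bar{D}$ meets $S$, and because $\bar{C}$ and $\bar{D}$ are distinct components of $\bar{F}_{x_0}$ that both meet $S$, the curves $C$ and $D:=\bar{D}\cap S$ are distinct irreducible components of $F_{x_0}$ (Sect.~\ref{open}). Hence $\pi^{-1}(x_0)$ is reducible, contradicting the hypothesis that every fibre of $\pi$ is irreducible. Therefore every fibre of $\bar{\pi}$ is irreducible, as claimed.

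This argument is essentially formal once the structure theory of singular fibres is granted, so I do not expect a genuine obstacle; the only subtle point is the division of labour between the two hypotheses on the fibres of $\pi$. Reducedness of the fibres is used exactly to exclude the case in which $\bar{F}_{x_0}$ has a single $(-1)$-component occurring with multiplicity $>1$, a situation about which property (iii) says nothing; irreducibility of the fibres is then used only at the last step, to convert the existence of two components of $F_{x_0}$ meeting $S$ into the desired contradiction. Property (i) (the components forming a tree of smooth rational curves) plays no role here.
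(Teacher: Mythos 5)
Your proof is correct and follows essentially the same route as the paper: contradiction via properties (ii) and (iii) of singular fibres, with minimality forcing each $(-1)$-component to meet $S$, reducedness of $F_{x_0}$ forcing multiplicity $1$, and irreducibility of $F_{x_0}$ supplying the final contradiction. Your closing remark on the division of labour between the reducedness and irreducibility hypotheses matches exactly how the paper's argument uses them.
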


\begin{proof}
Suppose that there exists $x \in X(k)$ such that the fiber of $\bar{\pi}$ over $x$ is not irreducible and therefore is singular.
Then $\bar{F}_x$ contains as an irreducible component a $(-1)$-curve, say $\bar{C}_1$ with multiplicity $m\ge 1$ (Sect. \ref{prelim}). The minimality of $\bar{S}$ implies that $C_1=\bar{C}_1 \bigcap S$ is non-empty and therefore is an irreducible component of $F_x$ with the same multiplicity $m$ (Sect. \ref{open}). Since the fiber of $\pi$ over $x$ is reduced, $m=1$. This implies that $\bar{F}_x$ contains another irreducible component $\bar{C}_2$ that is also
a $(-1)$-curve.  Again $C_2=\bar{C}_2 \bigcap S$  is an irreducible component of $F_x$  that does not coincide with $C_1$. This implies that the fiber of $\pi$ over $x$ is {\sl not} irreducible, which is not the case.
\end{proof}

\begin{thm}
\label{extension}
Assume that $\pi(S)=X$ and all the fibers of $\pi$ are smooth irreducible and reduced. Let  $\bar{S}$ be a minimal closure of $S$  Then every biregular automorphism of $S$ extends uniquely to a biregular automorphism of $\bar{S}$. In other words,
$$\Aut(S) \subset \Aut(\bar{S})\subset \Bir(\bar{S}).$$
\end{thm}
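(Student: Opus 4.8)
The plan is to resolve the self-map $\sigma\in\Aut(S)$ — regarded as an element of $\Bir(\bar S)$ — by a single smooth projective surface dominating $\bar S$ in two ways, and then to play the minimality of $\bar S$ (through Lemma \ref{irred}) against the hypothesis $\pi(S)=X$ in order to forbid any indeterminacy of $\sigma$ on $\bar S$.

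First I would fix $\sigma\in\Aut(S)$ and choose a smooth projective surface $Z$ together with birational morphisms $g_1,g_2\colon Z\to\bar S$ with $g_2=\sigma\circ g_1$, taken minimal in the sense that no irreducible curve of $Z$ is contracted by both $g_1$ and $g_2$; one obtains such a $Z$ by resolving the closure of the graph of $\sigma$ in $\bar S\times\bar S$ and then contracting the curves that are exceptional for both projections. Next I would record two consequences of the regularity of $\sigma|_S$ and $\sigma^{-1}|_S$. (a) Neither $g_1$ nor $g_2$ contracts a curve lying over $S$: if an irreducible curve $E\subset Z$ were contracted by $g_1$ onto a point $q\in S$, then on a neighbourhood of $E$ the map $g_2=\sigma\circ g_1$ is a composite of morphisms ($\sigma$ being regular at $q$), hence equals $g_2$ as morphisms there and also contracts $E$, against the minimality of $Z$; therefore every curve contracted by $g_1$, and symmetrically every curve contracted by $g_2$, lies over $\bar S\setminus S$, so each of $g_1,g_2$ restricts to an isomorphism from its preimage of $S$ onto $S$. (b) Consequently $g_1^{-1}(S)=g_2^{-1}(S)=:S_Z$ (on $g_1^{-1}(S)$ the map $\sigma\circ g_1$ has values in $S$, and conversely), so that $Z\setminus S_Z=g_1^{-1}(\bar S\setminus S)=g_2^{-1}(\bar S\setminus S)$.

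Then I would argue by contradiction. Assume $\sigma\notin\Aut(\bar S)$; then $g_1$ is not an isomorphism, hence contracts some irreducible curve $E$, which by (a)--(b) lies in $Z\setminus S_Z$, so $g_1(E)$ is a single point $q\in\bar S\setminus S$ contained in the fibre $\bar\pi^{-1}(x)$, $x:=\bar\pi(q)$, and $E$ lies in the fibre of $\bar\pi\circ g_1$ over $x$. By minimality of $Z$ the curve $E$ is not contracted by $g_2$, so $D:=g_2(E)$ is an irreducible curve, and $D\subset g_2(Z\setminus S_Z)\subset\bar S\setminus S$. On the other hand the fiberwise property of $\sigma$ from Sect.~\ref{barS} lifts to the identity of morphisms $\bar\pi\circ g_2=\fFF(\sigma)\circ\bar\pi\circ g_1\colon Z\to X$, which forces $D$ to lie in the fibre $\bar\pi^{-1}(x')$ over $x':=\fFF(\sigma)(x)$. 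By Lemma \ref{irred} that fibre is irreducible, so $D$ is set-theoretically the whole of $\bar\pi^{-1}(x')$; since $D$ is disjoint from $S$, the fibre over $x'$ is disjoint from $S$, i.e.\ $x'\notin\pi(S)$ — contradicting $\pi(S)=X$. Hence $g_1$ is an isomorphism and $\sigma=g_2\circ g_1^{-1}$ is a morphism $\bar S\to\bar S$; applying the same reasoning to $\sigma^{-1}\in\Aut(S)$ shows $\sigma^{-1}$ is also a morphism, whence $\sigma\in\Aut(\bar S)$, and uniqueness of the extension is automatic because $S$ is dense in $\bar S$.

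I expect the main obstacle to be the bookkeeping in (a)--(b): checking that the whole exceptional locus of $g_1$ lies over $\bar S\setminus S$ and coincides with that of $g_2$, so that an exceptional curve of $g_1$ — already confined to a single fibre of $\bar\pi$ over a point missing $S$ — is carried by $g_2$ onto an \emph{entire} fibre of $\bar\pi$. That is precisely where the two hypotheses enter: the irreducibility of the fibres of $\bar\pi$ is exactly $\bar S$ being a minimal closure (Lemma \ref{irred}), and the impossibility of a full fibre avoiding $S$ is exactly $\pi(S)=X$; everything else is routine surface geometry.
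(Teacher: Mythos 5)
Your proof is correct, and while it shares the paper's skeleton --- resolve the indeterminacy of $\sigma\in\Aut(S)\subset\Bir(\bar S)$, observe via Lemma \ref{irred} that a troublesome exceptional curve would have to dominate an entire fibre of $\bar\pi$, and rule this out using $\pi(S)=X$ --- the mechanism of the final contradiction is genuinely different. The paper resolves on one side only: it takes $u\colon S'\to\bar S$, an isomorphism over $S$, together with the morphism $g'\colon S'\to\bar S$, and in the bad case (an exceptional curve $C'$ of $u$ with $g'(C')=\bar F_x$) it gets its contradiction by choosing a point $s\in F_x\subset S$ outside the finite set $g'(D')$ and exhibiting two distinct $g'$-preimages of $s$ (one in $u^{-1}(S)$ coming from $g^{-1}(s)$, one on $C'$), violating uniqueness of preimages away from the exceptional image. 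You instead take the minimal two-sided resolution (no curve contracted by both $g_1$ and $g_2$); your steps (a)--(b) show that every contracted curve lies over $\bar S\setminus S$ and that $g_1^{-1}(S)=g_2^{-1}(S)$, so the image $D=g_2(E)$ of a curve contracted by $g_1$ is an irreducible curve contained in $\bar S\setminus S$, which by irreducibility of the fibre over $\fFF(\sigma)(x)$ must be that whole fibre --- hence a fibre disjoint from $S$, contradicting $\pi(S)=X$ directly, with no auxiliary point-counting. The price is the (standard) existence of a minimal common resolution and the bookkeeping in (a)--(b); the gain is a cleaner, symmetric contradiction, and you never need the rationality of the exceptional curve, since the identity of morphisms $\bar\pi\circ g_2=\fFF(\sigma)\circ\bar\pi\circ g_1$ replaces the paper's ``rational curves map to points of $X$.'' One small wording point: the inference ``assume $\sigma\notin\Aut(\bar S)$; then $g_1$ is not an isomorphism'' is unnecessary and, as stated, would itself need justification; your argument in fact shows unconditionally that $g_1$ contracts no curve, hence is an isomorphism, so $\sigma$ extends to a morphism, and applying the same to $\sigma^{-1}$ finishes the proof exactly as your last sentence indicates.
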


\begin{proof}
 By Lemma \ref{irred}, every fiber
$\bar{F}_x$
is  an irreducible curve isomorphic to $\P^1$.

Let $g: S \to S$ be a biregular automorphism of $S$. Let us extend $g$ to a birational map
$$\bar{g}: \bar{S} \to \bar{S}.$$
 Assume that $\bar{g}$ is {\sl not} a regular map.
 Let   $S'$ be a {\sl resolution of the indeterminacies} of $\bar{g}$, i.e.
a smooth irreducible surface included into the following commutative digram.
\begin{equation}\label{diagram}
\begin{aligned}
& &   & S^{\prime}        &  {}  & {}        &   & & \notag \\
& &u& \downarrow  &        {\searrow}                     &  g^{\prime}  & & &\notag \\
& & &\bar{S}         &\stackrel{\bar{g} }{\dashrightarrow } &\bar{S}   & & &\notag \\
& & &\cup       &  {}                              &\cup       &  & &\notag \\
& & &S              & \stackrel{ g}{\longrightarrow }    & S  & &          &\notag \\
&  & \pi& \downarrow  &                              &  \downarrow & \pi& &\notag \\
& &  &X            &\stackrel {h}{\longrightarrow} &X  &&    &        \end{aligned},  \end{equation}
where $u$ is a  birational morphism that is a composition of finitely many blow ups and induces a biregular isomorphism between $u^{-1}(S)$ and $S$
(such an $u$ exists, because $g$ is defined on $S$),
  $g^{\prime}$ and $\bar{\pi}^{\prime}=\bar{\pi}\circ u$
are
morphisms, and  $h=\fFF(g) \in \Aut(X)$ is a biregular automorphism of $X$. (The group homomorphism $\fFF$ is defined in Sect. \ref{barS}.)
Let $D^{\prime}\subset  S^{\prime}$ be the union of all exceptional curves for $g^{\prime}$ and let $D=g^{\prime}(D^{\prime})\subset \bar{S}$,
which is a finite set.

{\it Every point $z$ of $\bar{S}$ that does {\sl not} lie on $D$ has only one preimage ${g^{\prime}}^{-1}(z)\in S^{\prime}$ (\cite[Ch. 2, Sect. 4, Th. 2]{ShAG}).}

  Let $B^{\prime}$ be the union of exceptional curves for $u$. Clearly,
$$B^{\prime}\subset S^{\prime} \setminus u^{-1}(S).$$
This implies that
$$u(B^{\prime}) \bigcap S = \emptyset .$$
We want to show that $B^{\prime}\subset D^{\prime}$, because then one may contract all
components of $B'$ and  $\bar{g}$  would appear to be a morphism.

Let $C^{\prime}$ be an irreducible component of $B^{\prime}$.
The {\sl point} $u(C^{\prime})$ lies in  $u(B^{\prime})$ and therefore does {\sl not} belong to $S$.

 Since $X$ is an elliptic curve, and $C^{\prime}$ is rational,  $\bar{\pi} (g^{\prime}(C^{\prime}))$ is a point $x\in X(k)$. Thus, since all the  fibers of $\bar{\pi}$ are irreducible  (thanks to Lemma \ref{irred}), either

{\bf Case 1.}    $g^{\prime}(C^{\prime})$ is a point and therefore  $C^{\prime}\subset D^{\prime};$

 or

{\bf Case 2.} $g^{\prime}(C^{\prime})=\bar{F}_x=\bar{\pi}^{-1}(x)\subset \bar{S}$.
Let us put  $x_1:=h^{-1}(x) \in X(k)$. Then $x=h(x_1)\in X(k)$.
Let $s\in F_x\setminus (F_x\cap D)\subset S$ be a point of the fiber $F_x$,  which is not in the image of $D^{\prime}$.  Therefore it has only one {\sl preimage} $s_1:={g^{\prime}}^{-1}(s).$ Moreover, $s_1\in u^{-1}(S)$, because  $s\in S$. On the other hand,
since  $g^{\prime}(C^{\prime})=\bar{F}_x$,
there is a point $c\in C^{\prime}\subset S^{\prime} \setminus u^{-1}(S) $ such that $g^{\prime}(c)=s.$ Clearly, $c \ne s_1$ and we get a contradiction that shows that 
the Case 2 does not occur.

This proves that every $g \in \Aut(S)$ extends to a regular birational map $\bar{g}:\bar{S} \to \bar{S}$. Since the same is true for  $g^{-1} \in \Aut(S)$,  the map $\bar{g}$ is a biregular automorphism of $\bar{S}$.

\end{proof}

\section{Proof of Theorem \ref{elliptic}}
\label{mainP}
Remark \ref{quasiP} tells us that we may assume that $S$ is  a smooth   quasi-projective surface.
In light of results of Section \ref{degenerate}, we may  also assume that  every fiber of $\pi$ is  smooth irreducible and reduced, and $\pi(S)=X$.  Let  $\bar{S}$ be a minimal closure of $S$. By Theorem \ref{extension},
$\Aut(S)$ is a subgroup of $\Aut(\bar{S})$. Since $\bar{S}$ is projective, the results of \cite{ZarhinSurface} imply that the group $\Aut(\bar{S})$ is Jordan and therefore its every subgroup is Jordan. It follows that $\Aut(S)$ is Jordan.

\end{document}